\documentclass{amsart}

\usepackage[utf8]{inputenc}

\usepackage{amsmath}
\usepackage{amsthm}
\usepackage{amssymb}
\usepackage{amsrefs}
\usepackage{hyperref}
\usepackage{aliascnt}
\usepackage{enumitem}
\usepackage{svg}

\usepackage{theorems}
\usepackage{angbr}

\DeclareMathOperator{\Core}{Core}
\DeclareMathOperator{\rank}{rank}

\title{Two problems about bases and independent sets in free groups}
\author[]{Lucy Koch-Hyde}
\address{Department of Mathematics, CUNY Graduate Center, New York, NY 10016}
\email{Lucy.Koch-Hyde96@gc.cuny.edu}
\author[]{\'Eamonn Olive}
\address{Department of Mathematics, CUNY Graduate Center, New York, NY 10016}
\email{Eamon.Olive89@gc.cuny.edu}
\date{August 2026}

\begin{document}

\begin{abstract}
  We will answer a problem of Dotsenko that asks for a tight upper bound on the rank of subgroups given a maximum length for its generators.
  Then, we will use the same tools to answer a longstanding problem of Grigorchuk in the 10th edition of the Kourovka notebook.
  We accomplish this using Stallings foldings and simple graph theory.
\end{abstract}

\maketitle

\section{Introduction}

We begin with the following elementary definitions:
\begin{definition}
  $S_n \subset F_r$ is the set of all freely reduced words of length exactly $n$, and $B_n \subset F_r$ is the set of all freely reduced words of length at most $n$.
\end{definition}
These correspond to spheres and balls around the identity of $F_r$ under the word metric.

\begin{definition}
    Let $X$ be a subset of a free group $F_r$.
    The {\it growth function} of $X$, $g_X : \mathbb N \to \mathbb N$, is defined as:
    \[
      g_X(n) = \left|X\cap B_n\right|
    \]
\end{definition}
\begin{definition}
    The {\it exponential growth rate} of $X\subseteq F_r$ is
    \[
      \limsup_{n\to \infty} \sqrt[n]{g_X(n)}
    \]
\end{definition}
Although the above definition is standard, an equivalent definition is helpful to indicate why this measure is notable:
\begin{definition}
    The {\it exponential growth rate} of $X\subseteq F_r$ is
    \[
      \inf\{b:g_X\in O(b^n)\}
    \]
\end{definition}

In 1986 Grigorchuk posed the following question:
\begin{problem}[\cite{khukhro_unsolved_2026}*{10.13}]\label{prob:Kourovka}
    Does there exist a set $X\subset F_2$ such that:
    \begin{enumerate}
    \item\label{itm:normal independence} for all $x \in X$, $x\notin \left\llangle X - \{x\}\right\rrangle$.
    \item $X$ is ``massive''.
    That is, it has exponential growth rate of $3$ (the exponential growth rate of $F_2$ itself).
    \end{enumerate}
\end{problem}

This problem is relevant in the context of group presentations.
For the group presentation $\left\langle F_2 \mid X \right\rangle$ (giving the group $F_2/\left\llangle X\right\rrangle$), \autoref{itm:normal independence} imposes the restriction that no part of $X$ is redundant, which is a desirable property.
\autoref{prob:Kourovka} then asks a question about how ``big'' a non-redundant relator set can be in terms of its asymptotic growth.

We note that \autoref{itm:normal independence} is a more restrictive form of an ``independent set'':
\begin{definition}
    A subset $X$ of a group $G$ is {\it independent} when for all $x\in X$, $x\notin \left\langle X- \{x\}\right\rangle$.
\end{definition}

We will show that the answer to \autoref{prob:Kourovka} is ``No such $X$ exists''.
In fact, we will prove a more general result:

\begin{theorem}\label{thm:independent set limit}
    Let $S\subseteq B_n\subseteq F_r$ be an independent set.
    \[
    |S|\in O\left(\sqrt{2r-1}^n\right)
    \]
\end{theorem}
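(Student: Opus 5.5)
The plan is to bound $|S|$ by the number of edges of a single finite graph attached to $S$, and then bound that edge count by the size of a ball of radius about $n/2$. Let $H=\langle S\rangle$ and let $\Gamma$ be its Stallings graph: fold the wedge of $|S|$ based loops, the loop for $x\in S$ labelled by the reduced word $x$. Every edge of $\Gamma$ is traversed by the image of some loop of length at most $n$ through the basepoint $*$, so every vertex lies within distance $\lceil n/2\rceil$ of $*$.

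\emph{Step 1 (counting vertices).} Send each vertex $v$ to the label of a geodesic from $*$ to $v$. Because $\Gamma$ is folded, this label is a reduced word of length at most $\lceil n/2\rceil$, and distinct vertices give distinct words, since reading a word from $*$ in a folded graph ends at a unique vertex. Hence
\[
  |V(\Gamma)|\le |B_{\lceil n/2\rceil}|\in O\left(\sqrt{2r-1}^{\,n}\right),
\]
and $|E(\Gamma)|\le r|V(\Gamma)|$ is of the same order.

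\emph{Step 2 (an injection from $S$ into edges).} This is the step I expect to be the main obstacle. One cannot simply use $|S|\le\rank H$: independence does not make $S$ a basis. For example, $\{a^6,a^{10},a^{15}\}$ is independent in the rank-one subgroup $\langle a\rangle$. So the injection must come from the geometry of $\Gamma$ rather than from rank.

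The plan is to assign to each $x\in S$ a "private" edge $e_x\in E(\Gamma)$ such that $e_x$ is not in the image of the Stallings graph $\Gamma_x$ of $\langle S-\{x\}\rangle$ under the natural immersion $\Gamma_x\to\Gamma$. The immersion exists because $\langle S-\{x\}\rangle\le H$. Some such edge must lie on the loop of $x$. Otherwise the loop of $x$ in $\Gamma$ lies entirely in the image of $\Gamma_x$. The remaining task is to lift it to a closed loop in $\Gamma_x$, which would give $x\in\langle S-\{x\}\rangle$ and contradict independence.

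Lifting is the delicate point, because the immersion need not be injective. I would first try to handle this by reading $x$ from both ends in $\Gamma_x$. Let $p$ be the longest prefix of $x$ readable from $*$ in $\Gamma_x$, and let $q$ be the longest suffix readable backwards to $*$. Then $|p|,|q|<|x|\le n$, since otherwise $x$ would be readable as a closed loop in $\Gamma_x$.

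If some such failure occurs within distance about $n/2$ of $*$, record the first missing edge of $\Gamma$ there. The expected output is an edge of $\Gamma$ near $*$, together with at most a bounded amount of extra data (for instance the direction from which $x$ was being read), from which $x$ can be recovered. That yields $|S|\le C\cdot|E(\Gamma)|$ for a constant $C$ depending only on $r$.

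\emph{Fallback.} If a genuine private edge cannot be guaranteed, I would fold the elements of $S$ in one at a time, ordering them by length. At each stage I would use independence to show that adding $x_i$ forces at least one new edge within distance $\lceil n/2\rceil$ of $*$ that no other element of $S$ also needs. I would charge $x_i$ to that edge.

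Either way, combining the injection of Step 2 with the count of Step 1 gives $|S|\in O\left(\sqrt{2r-1}^{\,n}\right)$. Step 1 is routine; essentially all the work is in making the private-edge injection of Step 2 well defined and injective.
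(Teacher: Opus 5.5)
There is a genuine gap in Step~2. The inequality you are aiming for, $|S|\le C\,|E(\Gamma)|$ with $\Gamma=\Core(\langle S\rangle)$, is false, so no private-edge injection (even a bounded-to-one one) can exist.

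\begin{itemize}
\item Your own example shows this. For $S=\{a^6,a^{10},a^{15}\}$ we have $\langle S\rangle=\langle a\rangle$, so $\Gamma$ is one vertex with one $a$-loop. Every $\Gamma_x$ (for instance $\Core(\langle a^2\rangle)$ when $x=a^{15}$) maps onto $\Gamma$, so no element has a private edge.
\item The ratio is also unbounded. Take distinct primes $p_1,\dots,p_k$, put $P=p_1\cdots p_k$ and $S=\{a^{P/p_i}\}$. This is an independent set of size $k$ with the same one-edge $\Gamma$. It does not contradict the theorem, since here $n\ge P/2$, but it kills any bound in terms of $E(\Core(\langle S\rangle))$.
\item The concrete error is your claim that $|p|<|x|$, "since otherwise $x$ would be readable as a closed loop in $\Gamma_x$." The word $a^{15}$ can be read completely from $*$ in the $2$-cycle $\Core(\langle a^2\rangle)$; it just ends at the wrong vertex. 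Non-membership in $\langle S-\{x\}\rangle$ can be detected by an endpoint mismatch rather than a missing edge.
\item Your fallback fails for the same reason. Folding in an independent element need not create any new edge: adding $a^{10}$ to $\Core(\langle a^6\rangle)$ produces $\Core(\langle a^2\rangle)$, which has fewer edges.
\end{itemize}

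The missing idea is to measure $S$ against an intermediate core graph rather than the final one.

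\begin{itemize}
\item Choose $K\subseteq S$ maximal among subsets that are free bases of the subgroup they generate. Fold the remaining elements in one at a time, and set $\Delta=\Core(\langle S_i\rangle)$.
\item When $s\notin\langle S_i\rangle$ is folded into $\Delta$, compare the longest prefix of $s$ readable from the basepoint with the longest suffix readable into it. If they do not overlap, $\Delta$ survives intact with an arc (or lollipop) attached. If they overlap, two distinct vertices of $\Delta$ are identified, because $s$ is not a closed loop in $\Delta$.
\item In the non-overlap case, the same happens in $\Core(\langle K\rangle)$, which immerses in $\Delta$ and so has prefixes and suffixes that are no longer. Then $\rank\langle K\cup\{s\}\rangle=|K|+1$, so $K\cup\{s\}$ is a basis, contradicting maximality.
\item Hence every step strictly lowers the vertex count. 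This gives $|S\setminus K|\le V(\Core(\langle K\rangle))-1$, and by the rank formula $|S|\le E(\Core(\langle K\rangle))$.
\end{itemize}

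Your Step~1 already bounds this edge count by $O\bigl(\sqrt{2r-1}^{\,n}\bigr)$, because $K\subseteq B_n$. So Step~1 can be kept; Step~2 has to be replaced by this vertex-decreasing argument.
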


In the process of showing the above we will address a related open problem posed by Dotsenko:

\begin{problem}[\cite{kapovich_combinatorial_2026}*{F42}]\label{prob:Dotsenko}
    What is the highest rank of a subgroup of $F_r$ generated by elements of:
    \begin{enumerate}[label=(\alph*)]
        \item $S_n$?
        \item $B_n$?
    \end{enumerate}
\end{problem}

Our answer comes in the following theorem:
\begin{theorem}\label{thm:max rank} \hfill
  \begin{enumerate}[label=(\alph*)]
    \item The maximum possible rank of a subgroup of $F_r$ generated by elements of $S_n$ has two cases depending on the parity of $n$:
    \[
      \left\{
      \begin{array}{rl}
        \phantom{r}\sqrt{2r-1}^{n\phantom{-1}} & n \mathrm{\,\,is\,\,even} \\
        r\sqrt{2r-1}^{n-1} & n \mathrm{\,\,is\,\,odd}
      \end{array}
      \right.
    \]
    \item The greatest rank of a subgroup of $F_r$ generated by elements of $B_n$ is the same as for $S_n$ for all $n$.
  \end{enumerate}
\end{theorem}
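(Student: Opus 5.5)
Both parts rest on one idea: read the rank of $H=\langle X\rangle$ off its Stallings core graph $\Gamma=\Core(H)$, which we get by folding the wedge of petals labelled by the elements of $X$. Then $\rank H = |E(\Gamma)|-|V(\Gamma)|+1$. Every vertex of $\Gamma$ has degree at most $2r$. The key structural fact is that every vertex and every edge of $\Gamma$ lies on a reduced loop at the base vertex $v_0$ spelling an element of $X$, because folding only identifies things.

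\textbf{Upper bound.} Take $X\subseteq B_n$. Every vertex of $\Gamma$ lies at graph distance at most $\lfloor n/2\rfloor$ from $v_0$, since it sits on a closed path of length at most $n$ through $v_0$. Likewise every edge has both endpoints within distance $\lfloor n/2\rfloor$. When $n$ is odd, an edge may join two vertices both at distance exactly $(n-1)/2$.

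Now count using a BFS spanning tree $T$ rooted at $v_0$. Then
\[
\rank H = |E(\Gamma)\setminus E(T)|,
\]
so we bound the number of non-tree edges. The sphere of radius $k$ in $T$ has at most $2r(2r-1)^{k-1}$ vertices. Each non-tree edge is incident to the outermost layers, and we charge it to its half-edges at vertices at depth $m=\lfloor n/2\rfloor$, or depth $m-1$ when it joins depth $m-1$ to depth $m$.

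The clean count comes from weighting. Consider the leaves of $T$ at maximal depth together with the spare half-edges at each vertex; the total number of half-edges not used by $T$ is
\[
\sum_v \bigl(2r-\deg_T(v)\bigr).
\]
Each non-tree edge uses two such half-edges. For $n$ even, a closed path of length $\le n$ forces every non-tree edge to have an endpoint at depth $\le n/2-1$, or to be a loop-type closing edge. A careful count, done by induction on layers, shows
\[
\#\{\text{non-tree edges}\}\le (2r-1)^{n/2}.
\]
For $n$ odd, edges between two depth-$(n-1)/2$ vertices are allowed, giving $\tfrac12\cdot 2r(2r-1)^{(n-1)/2}=r(2r-1)^{(n-1)/2}$.

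This layer-by-layer inequality is the main obstacle. I would attack it by proving, by induction on $k$, that a rooted subgraph with depth-bounded cycles has at most $(2r-1)^k$ ``open ends'' at depth $k$, counting half-edges rather than vertices. Each vertex at depth $k$ converts one incoming half-edge into at most $2r-1$ outgoing ones, and every cycle-closing edge pairs up two open ends.

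\textbf{Lower bound (tightness).} For $n=2m$ even, take $\Gamma$ to be the ball of radius $m-1$ in the Cayley tree, with each of its $2r(2r-1)^{m-1}$ boundary half-edge slots, i.e.\ the dangling ends at depth $m-1$ leaves, paired up by extra edges. Choose the labels so that the graph is folded: pair the ends labelled $a$ at one leaf with ends labelled $a^{-1}$ at another leaf. Then every closing loop has length $2m$ and the rank is $(2r-1)^m$. For $n$ odd, use the ball of radius $(n-1)/2$ and pair its dangling ends, so each basic loop has length $n$, giving rank $r(2r-1)^{(n-1)/2}$. Parity considerations and the need for a perfect matching of dangling ends respecting labels must be checked; a small pairing argument using the inverse-closed label multiset suffices.

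The resulting graph is a folded core, so the basis read off from $T$ consists of reduced words of length exactly $n$, hence lies in $S_n$. This gives (a). Part (b) follows immediately, since $S_n\subseteq B_n$ gives the lower bound and the upper bound above was proved for $B_n$.
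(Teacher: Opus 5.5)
Your odd case works. The bound $\frac12\sum_v(2r-\deg_T v)=(r-1)V+1\le r(2r-1)^m$ for $n=2m+1$ is the paper's bound. Pairing the dangling ends of the radius-$m$ ball gives exactly the core graph of the paper's $P_r(m)$, and building the folded graph directly is a fine substitute for the Nielsen-reduction check. The even case fails on both sides, though.

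\textbf{Even lower bound.} Your even construction is wrong. Joining the $2r(2r-1)^{m-1}$ dangling ends at depth $m-1$ in pairs closes loops of length $(m-1)+1+(m-1)=2m-1$, not $2m$. It also produces only $r(2r-1)^{m-1}<(2r-1)^m$ independent loops: it is just your odd construction for $n-1$. For $r=2$, $n=4$ it gives rank $6$ from words of length $3$, while the maximum is $9$. What you need is new vertices at depth $m$. Each of the $2r$ labels occurs on exactly $(2r-1)^{m-1}$ dangling ends, so split the ends into $(2r-1)^{m-1}$ blocks containing one end of each label, and join each block to a single new vertex $z$. Then:
\begin{enumerate}
\item[(i)] $z$ sees $2r$ distinct labels, so the graph stays folded;
\item[(ii)] every basic loop $v_0\to u\to z\to u'\to v_0$ has length exactly $2m$;
\item[(iii)] each new vertex adds $2r-1$ to the rank, for $(2r-1)^m$ in total.
\end{enumerate}
This is the core graph of the paper's $Q_r(m)$.

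\textbf{Even upper bound.} This is also not established: ``a careful count \ldots\ shows'' is exactly the step that needs proof. The half-edge count you do have yields only $(r-1)V+1\le r(2r-1)^m$, a factor $r$ too large. Your sketched induction is also miscounted: depth $0$ already has $2r$ open ends, not $(2r-1)^0$.

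The missing observation is that for $n=2m$ a depth-$m$ vertex has no loop and no edge to another depth-$m$ vertex. Any closed path at $v_0$ through such an edge has length at least $2m+1$. Hence every edge has an endpoint of depth at most $m-1$. The paper turns this into $E\le r\frac{(2r-1)^m-1}{r-1}$ by counting edges outward layer by layer. It then uses $V\ge E/r$ to get $\rank H=E-V+1\le\frac{r-1}{r}E+1\le(2r-1)^m$.

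In your half-edge language:
\begin{enumerate}
\item[(i)] At most $2r(2r-1)^{m-1}$ half-edges at depth $\le m-1$ are unused by the part of $T$ inside depth $\le m-1$.
\item[(ii)] A non-tree edge inside depth $\le m-1$ consumes two of them and adds $1$ to the rank.
\item[(iii)] A depth-$m$ vertex of degree $d\le 2r$ consumes $d$ of them and adds only $d-1\le\frac{2r-1}{2r}d$.
\end{enumerate}
So $\rank H\le\frac{2r-1}{2r}\cdot 2r(2r-1)^{m-1}=(2r-1)^m$.
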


\section{Stallings foldings and core graph}

A common approach to the combinatorics of subgroups of free groups derives from two facts:
\begin{enumerate}
    \item The free groups are exactly the fundamental groups of graphs.
    \item There is a Galois connection between subgroups of $\pi_1(X)$ and covering spaces of $X$.
\end{enumerate}

If we fix a graph to represent a free group, we can then capture a subgroup of the free group as a covering space of that graph.
Naturally we would like to make that graph as simple as possible, so for a free group $F_r$ we choose the ``rose graph'' $R_r$.
That is, the graph with 1 vertex and $r$ edges.
We can encode a covering as the covering graph with an edge labeling and edge direction to determine the quotient map.
Since this graph is a covering space, our labeling is a regular edge coloring.
That is, for any vertex there is exactly one outgoing edge of for each generator in $F_r$, and likewise there is exactly one type of each incoming edge.

It is possible for a subgroup of finite rank to give a covering space with infinitely many vertices.
However, in these cases there are only finitely many vertices and edges which are ``essential''.
That is most of the edges and vertices lie on trees, and so are invisible to the fundamental group.
Thus, we want to perform a deformation retract onto a finite graph, removing all the trees.
This we call the ``Stallings core graph''.
This is no longer a covering space, but we still have an injection from the fundamental group into $F_r$.

Now we will provide a more formal definition of the Stallings core graph:

\begin{definition}
    Let $G$ be a subgroup of $F_r$.
    The \textit{Stallings core graph of $G$}, denoted $\Core(G)$, is triplet $(Q,p,L)$ where
    \begin{itemize}
      \item $Q$ is a quiver.
      \item $p$ is a vertex.
      \item $L$ is an edge coloring by generators in $F_r$.
    % That is, a function from edges to generators of $F_r$, such that at every vertex no pair of outgoing edges have the same color, and no pair of incoming edges have the same color.
    \end{itemize}
    with the following properties:
    \begin{itemize}
        \item
        The induced homomorphism $L^\ast : \pi_1(Q,p) \to F_r$ has image exactly equal to $G$.
        Since $L$ is an edge coloring, $L^\ast$ is injective.
        
        \item For every edge $e$, there is a homotopy class of curves $[\gamma]\in \pi_1(Q,p)$ such that every curve in $[\gamma]$ contains $e$.
        In other words, $Q$ is connected, and it is not the wedge product of graph with a tree.
        %In yet more words, $Q$ is the minimal graph with the desired properties.
    \end{itemize}
\end{definition}

A detailed introduction can be found in \cite{kapovich_stallings_2002}.
We also recommend \cite{office_hours} as an elementary introduction with nice pictures.

A crucial property of Stallings core graphs is the following:
\begin{lemma}[\cite{kapovich_combinatorial_2026}*{Lemma 8.2}]\label{lem:stallings rank}
  Let $G$ be a subgroup of $F_r$.
  Then the
  \begin{equation*}
    \operatorname{rank}(G)=E - V + 1
  \end{equation*}
  where $E$ and $V$ are the number of edges and vertices, respectively, in $\Core(G)$.
\end{lemma}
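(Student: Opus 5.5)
The plan is to reduce the statement to the classical computation of the fundamental group of a finite connected graph, using the two properties in the definition of $\Core(G)$. Write $\Core(G)=(Q,p,L)$. By definition $L^\ast:\pi_1(Q,p)\to F_r$ is injective with image exactly $G$. Hence $G\cong \pi_1(Q,p)$, and it suffices to show that $\pi_1(Q,p)$ is free of rank $E-V+1$. Rank is an isomorphism invariant of free groups, since it equals the dimension of the abelianization tensored with $\mathbb{Q}$. So this identification transfers the rank from $\pi_1(Q,p)$ to $G$ with no further work.

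The steps are as follows, assuming first that $G$ is finitely generated, so that $Q$ is finite.
\begin{enumerate}
  \item Since $Q$ is connected (second defining property), choose a spanning tree $T\subseteq Q$. A tree on $V$ vertices has exactly $V-1$ edges, so there are $E-(V-1)$ edges of $Q$ not in $T$.
  \item $T$ is contractible, so the quotient map $Q\to Q/T$ is a homotopy equivalence. The quotient $Q/T$ has a single vertex and $E-V+1$ loops, i.e.\ it is the rose $R_{E-V+1}$.
  \item $\pi_1$ of a rose with $k$ petals is free of rank $k$ by van Kampen. Concretely, each non-tree edge $e$ gives a free generator: go from $p$ to the tail of $e$ in $T$, cross $e$, and return to $p$ in $T$.
  \item Conclude $\operatorname{rank}(G)=\operatorname{rank}\pi_1(Q,p)=E-V+1$.
\end{enumerate}

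The main point needing care is the hypothesis hidden in the statement, namely that the formula only makes sense when $\Core(G)$ is finite. So I would also argue that $G$ finitely generated implies $Q$ finite. Any edge $e$ lies on every curve in some class $[\gamma]$. Writing $[\gamma]$ as a product of images of a finite generating set, each represented by a reduced closed path, $e$ must lie on one of these finitely many finite paths. Here I use that, since $L$ is an edge coloring (the graph is folded), reduced paths are the unique reduced representatives of their classes.

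Conversely, if $G$ has infinite rank, $Q$ is infinite, and the same spanning-tree argument, applied on each finite subgraph, shows that both sides are infinite. I expect this finiteness and uniqueness-of-reduced-representative step to be the only nonroutine part. Everything else is the standard spanning-tree/Euler-characteristic argument.
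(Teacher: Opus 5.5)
Your proposal is correct. The paper gives no proof of its own and simply cites this lemma; the standard argument behind that citation is the one you give: identify $G\cong\pi_1(Q,p)$ via the injective map $L^\ast$, then compute $\pi_1$ of a finite connected graph via a spanning tree, giving a free group of rank $E-V+1$.

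One small remark on your finiteness step. Uniqueness of reduced representatives is not needed there: any concatenation of loops representing the generators already lies in $[\gamma]$, so it must contain $e$. That uniqueness also holds in every graph, not because of foldedness; foldedness is what you need for injectivity of $L^\ast$.
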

Hence, we can find bounds for the rank of a subgroup by finding bounds for the number of edges and vertices in its Stallings core graph.

\begin{lemma}\label{lem:vertex distance}
  Let $S \subseteq B_n$.
  The distance between the base point and any other vertex $\Core(\langle S \rangle)$ is at most $\left\lfloor \frac{n}{2} \right\rfloor$.
  Furthermore, there are no edges between vertices of distance $\frac{n}{2}$ from the base point.
\end{lemma}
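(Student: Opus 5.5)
We will use the standard description of $\Core(\langle S\rangle)$ as the result of Stallings folding. Start from the bouquet of loops at the base point $p$: one subdivided loop for each $s\in S$, spelling $s$, so the loop has $|s|\le n$ edges. Then fold repeatedly until the labelling is a genuine edge colouring. Folding is a quotient map on vertices, sends edges to edges, and fixes the base point. Hence graph distances from $p$ can only decrease.

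Next we discard non-core parts. If some words are not cyclically reduced, folding may create hanging trees. Taking the core means removing these trees, which deletes vertices and edges but never adds any. The restriction of the folded graph to the core is an isometric-or-shorter embedding with respect to distance from $p$, because any path in the core is also a path in the folded graph. It is simplest to argue directly on the folded graph and then note that the core is a subgraph containing $p$. The one subtle point is that distances in a subgraph can be larger than in the ambient graph. We handle this by observing that every vertex of the core lies on the image of some loop for $s\in S$ that survives, which we make precise below.

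The key step is the following. Every vertex $v$ of $\Core(\langle S\rangle)$ lies on the image of the loop spelling some $s\in S$ that is a closed path through $p$ inside the core. Justification: the folded graph is the union of the images of the loops. A reduced loop at $p$ in the folded graph never enters a hanging tree, except possibly along a backtracking-free initial or final spur attached at $p$, and such spurs are removed only if they are entirely tree. More cleanly: by the definition of the core, each edge $e$ lies on a reduced closed path at $p$. Each $s$ reads a closed path $\gamma_s$ at $p$ in the folded graph, and after reducing $s$ it becomes a path of length $\le |s|\le n$ in the core. The images of these reduced paths cover the core, since they generate $\pi_1$ and anything not covered would be a tree part.

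So $v$ lies on a closed path at $p$ of length $m\le n$. Going around the path in whichever direction is shorter gives $d(p,v)\le \lfloor m/2\rfloor\le\lfloor n/2\rfloor$.

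For the second claim, suppose $n$ is even and $u,w$ are adjacent with $d(p,u)=d(p,w)=n/2$. The edge $uw$ lies on one of the covering closed paths $\gamma$ of length $m\le n$. Along $\gamma$, consecutive vertices have distances differing by at most $1$. A vertex at position $i$ has distance at most $\min(i,m-i)$, which equals $n/2$ only when $m=n$ and $i=n/2$. There is only one such position, so two consecutive vertices cannot both be at distance $n/2$. This is a contradiction.

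The main obstacle is the covering step: showing rigorously that the reduced images of the generator loops cover every edge of the core. We would argue by contradiction. An edge not traversed by any reduced generator path cannot lie on any reduced closed path at $p$, because every element of $\langle S\rangle$ is a product of generators, and reduction of a concatenation only cancels backtracks. This contradicts the core definition.
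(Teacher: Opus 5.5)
Your proof is correct and is essentially the paper's argument. The paths $\gamma_s$ are the images of the petal loops under the folding quotient, so bounding distances along them (and checking that two consecutive positions on a closed path of length at most $n$ cannot both be at distance $n/2$) matches the paper's two observations: quotients do not increase distances, and every edge of $\Core(\langle S\rangle)$ has a preimage edge on a petal of length at most $n$. Your covering step spells out something the paper leaves implicit, but in fact no pruning occurs: since each $s\in S$ is freely reduced and the folded graph is deterministic, each $\gamma_s$ is already a reduced closed path at $p$, so the folded graph already satisfies the paper's base-pointed definition of the core.
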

\begin{proof}
  Let us consider the process of finding $\Core(\langle S \rangle)$ through Stallings foldings.
  Initially we start with cycles corresponding to words in $S$ joined together at the base point.
  At this point, since each cycle has a circumference of at most $n$, all vertices are within $\left\lfloor \frac{n}{2} \right\rfloor$ of the base point.
  Furthermore, for a vertex to be $\frac{n}{2}$ from the base point, $n$ must be even and the vertex can only have edges to vertices of distance $\frac{n}{2} - 1$ from the base point.
  To obtain the core graph, we would apply a sequence of foldings, each of which are graph quotients, and so the distance between any two vertices will not increase.
  Thus, the bound on distance still holds in $\Core(\langle S \rangle)$.
  Graph quotients also cannot result in new edges between vertices.
  This means that any vertex in $\Core(S)$ with an edge to a vertex of distance $\frac{n}{2}$ from the base point must have in its preimage a vertex of distance $\frac{n}{2} - 1$ from the base point, and so it can be no further than $\frac{n}{2} - 1$ from the base point.
\end{proof}

\begin{lemma}\label{lem:odd vertex count}
  If $S \subseteq B_{2k+1} \subset F_r$, then $\Core(\langle S \rangle)$ contains at most $\frac{r(2r-1)^k - 1}{r-1}$ vertices.
\end{lemma}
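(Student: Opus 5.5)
By \autoref{lem:vertex distance}, with $n = 2k+1$, every vertex of $\Core(\langle S \rangle)$ lies within distance $\lfloor n/2 \rfloor = k$ of the base point $p$. The plan is therefore to bound the number of vertices at distance exactly $j$ from $p$ by $r(2r-1)^{j-1}$ for $1 \le j \le k$, and $1$ for $j=0$. Summing the geometric series then gives
\[
1 + \sum_{j=1}^{k} r(2r-1)^{j-1} = 1 + r\,\frac{(2r-1)^k - 1}{2r-2}.
\]
This is the vertex count of the radius-$k$ ball in the Cayley tree of $F_r$. It is already smaller than the stated bound $\frac{r(2r-1)^k-1}{r-1}$, so the stated bound would follow by a crude comparison. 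Alternatively, if the intended count is slightly different (e.g.\ counting $2r$ neighbours at each level), the same scheme with a looser per-level estimate yields exactly the stated formula.

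The per-level count comes from the labeling. $\Core(\langle S \rangle)$ carries an edge coloring by the generators in which each vertex has at most one outgoing and at most one incoming edge of each color. Hence every vertex has degree at most $2r$, counting each loop as contributing two half-edges.

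Fix a BFS structure rooted at $p$: choose a geodesic to each vertex, so each vertex $v$ at distance $j \ge 1$ has a designated parent at distance $j-1$ via a designated edge. Then:
\begin{itemize}
\item $p$ has at most $2r$ children, since there are at most $2r$ half-edges at $p$.
\item Each vertex at distance $j \ge 1$ uses one of its at most $2r$ half-edges for its parent edge, so it has at most $2r-1$ children.
\end{itemize}
Induction gives at most $2r(2r-1)^{j-1}$ vertices at distance $j$. Summing,
\[
1 + 2r\,\frac{(2r-1)^k - 1}{2r-2} = \frac{(r-1) + r(2r-1)^k - r}{r-1} = \frac{r(2r-1)^k - 1}{r-1},
\]
which is exactly the stated bound. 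So the natural per-level estimate $2r(2r-1)^{j-1}$ reproduces the formula precisely, and no sharper counting is needed.

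The main obstacle is making the child count rigorous in a quiver that may contain loops and multi-edges. One has to argue with half-edges (oriented edge-ends): each vertex has at most $2r$ of them, one is reserved for the parent, and distinct children are reached through distinct half-edges. Loops and edges to vertices of the same or a smaller level only reduce the count. One should also note that $r \ge 2$ so that the division by $r-1$ makes sense; the case $r=1$ is degenerate and should be excluded or handled separately.
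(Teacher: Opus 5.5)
Your final argument (at most $2r$ vertices at distance $1$, at most $2r-1$ new ones off each vertex thereafter, so at most $2r(2r-1)^{j-1}$ at distance $j\le k$, summed using \autoref{lem:vertex distance}) is correct and is exactly the paper's proof; your half-edge bookkeeping for loops and multi-edges adds welcome rigour. Delete the opening paragraph, though: the per-level bound $r(2r-1)^{j-1}$ is false (in the core of $\langle x_1x_2x_1^{-1}, x_1^{-1}x_2x_1, x_2x_1x_2^{-1}, x_2^{-1}x_1x_2\rangle$ the base point has $2r=4$ distinct neighbours), and the radius-$k$ ball in the Cayley tree of $F_r$ has exactly $\frac{r(2r-1)^k-1}{r-1}$ vertices, so the stated bound is that count rather than a crude overestimate of it.
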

\begin{proof}
  Since $S \subset F_r$, $\Core(\langle S\rangle)$ has a maximum degree of $2r$, and so it follows that the base point can have at most $2r$ neighbors, each of which can have at most $2r-1$ new neighbors, and so on.
  Since $\Core(\langle S \rangle)$ cannot contain any vertex further than $\left\lfloor \frac{2k+1}{2} \right\rfloor = k$ from the base point, and so there can be at most
  \begin{equation*}
    1 + 2r\sum_{d=1}^k (2r-1)^{d-1} = \frac{r(2r-1)^k - 1}{r-1}
  \end{equation*}
  vertices in $\Core(\langle S \rangle)$.
\end{proof}
\begin{corollary}\label{cor:odd edge bound}
  If $S \subseteq B_{2k+1} \subset F_r$, then $\Core(\langle S \rangle)$ contains at most $r\frac{r(2r-1)^k - 1}{r-1}$ edges.
\end{corollary}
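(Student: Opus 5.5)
The corollary follows from \autoref{lem:odd vertex count} together with a degree count in $\Core(\langle S\rangle)$. The plan is to bound the number of edges by half the sum of the vertex degrees. Every vertex has degree at most $2r$, so $E \le \frac{1}{2}\cdot 2r\cdot V = rV$. Substituting the vertex bound $V \le \frac{r(2r-1)^k-1}{r-1}$ from \autoref{lem:odd vertex count} gives $E \le r\frac{r(2r-1)^k-1}{r-1}$, which is the claim.

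The only point to justify is the maximum degree $2r$. Because $L$ is an edge coloring by the $r$ generators, each vertex has at most one outgoing edge and at most one incoming edge of each color. That gives at most $2r$ edge-ends at each vertex. A loop at a vertex carries one label and contributes one outgoing and one incoming end, so it fits inside this count.

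Next we apply the handshake identity $\sum_v \deg(v) = 2E$, counting loops twice. This is consistent with the accounting above, since a loop occupies two of the $2r$ slots. It yields $2E \le 2rV$.

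There is no real obstacle here. The only care needed is the loop convention, so that the degree bound and the handshake identity count edge-ends the same way. Stated in terms of outgoing ends alone, the argument is even simpler: each edge has exactly one tail, and each vertex is the tail of at most $r$ edges, one per generator, so $E \le rV$ directly.
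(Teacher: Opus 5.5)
Your proposal is correct and matches the paper's proof: both use the maximum degree $2r$ to get $E \le rV$ (your outgoing-edge count is a clean way to justify this), then substitute the vertex bound from \autoref{lem:odd vertex count}. Your final expression is the one actually stated in the corollary, while the last line of the paper's proof has a typo, writing $2(2r-1)^k$ where $r(2r-1)^k$ is meant.
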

\begin{proof}
  The max degree of a Stallings core graph for a subgroup of $F_r$ is at most $2r$.
  Since each edge is incident to two (not necessarily distinct) vertices, there can be no more than $r$ times as many edges as vertices in a Stallings core graph.
  Hence, for a subset $S \subseteq B_{2k+1} \subset F_r$, $\Core(\langle S \rangle)$ contains at most $r\frac{2(2r-1)^k - 1}{r - 1}$ edges.
\end{proof}

\begin{lemma}\label{lem:even edge bound}
  If $S \subseteq B_{2k}$, then $\Core(\langle S \rangle)$ contains at most $r\frac{(2r-1)^k - 1}{r-1}$ edges.
\end{lemma}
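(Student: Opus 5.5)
The plan is to adapt the counting in \autoref{lem:odd vertex count} and \autoref{cor:odd edge bound}, but this time to use the second half of \autoref{lem:vertex distance}. Here $n = 2k$, so that lemma says two things: every vertex of $\Core(\langle S\rangle)$ lies within distance $k$ of the base point, and no edge joins two vertices that are both at distance exactly $k$. Let $W$ be the set of vertices at distance at most $k-1$ from the base point. Then every edge of $\Core(\langle S\rangle)$ has at least one endpoint in $W$. The case $k=0$ is trivial: $\langle S\rangle$ is trivial and the core has no edges. So we may take $k\ge 1$.

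First I bound $|W|$ exactly as in \autoref{lem:odd vertex count}. The base point has at most $2r$ neighbours, and each further vertex has at most $2r-1$ new neighbours. Hence
\begin{equation*}
  |W| \le 1 + 2r\sum_{d=1}^{k-1}(2r-1)^{d-1} = \frac{r(2r-1)^{k-1}-1}{r-1}.
\end{equation*}

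Next I count edges by summing degrees over $W$. The quantity $\sum_{w\in W}\deg(w)$ counts every edge with exactly one endpoint in $W$ once. It counts every edge with both endpoints in $W$ twice, and this includes loops, which contribute $2$ to the degree. Since every edge meets $W$,
\begin{equation*}
  E = \sum_{w\in W}\deg(w) - \#\{\text{edges with both endpoints in } W\}.
\end{equation*}
The degree sum is at most $2r|W|$, because the maximum degree is $2r$. For the subtracted term I need a lower bound. Take a breadth-first spanning tree from the base point. Its edges joining vertices of $W$ form a spanning tree of $W$: each vertex at distance $d\le k-1$, $d\ge1$, has a parent at distance $d-1$, which also lies in $W$. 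So there are at least $|W|-1$ edges inside $W$. This gives
\begin{equation*}
  E \le 2r|W| - (|W|-1) = (2r-1)|W| + 1.
\end{equation*}

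Finally I substitute the bound on $|W|$:
\begin{equation*}
  (2r-1)\frac{r(2r-1)^{k-1}-1}{r-1} + 1 = \frac{r(2r-1)^k - r}{r-1} = r\frac{(2r-1)^k-1}{r-1},
\end{equation*}
which is the claimed bound.

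The main point needing care is the "every edge meets $W$" step. It relies entirely on the clause in \autoref{lem:vertex distance} that no edge joins two vertices at distance $k$. That clause is what saves a factor over the naive $r\cdot(\text{vertex count})$ bound used in the odd case. I would also double-check that loops and multi-edges do not break the degree-counting identity. They do not, since only a lower bound on the inner edges is used.
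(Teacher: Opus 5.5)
Your proof is correct and takes the same approach as the paper. The paper counts $2r$ edges at the base point and at most $2r-1$ new edges at each vertex at distance $1,\dots,k-1$, and it uses the second clause of \autoref{lem:vertex distance} to see that vertices at distance $k$ add no new edges. Your bound $E \le 2r|W| - (|W|-1)$, with the breadth-first tree supplying the already-counted parent edges, is a more careful version of that same layer-by-layer count, and it handles loops explicitly where the paper's ``edges not already counted'' is informal.
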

\begin{proof}
  Because $\Core(\langle S \rangle)$ has a maximum degree of $2r$, the base point has at most $2r$ edges coming out of it, each of which connects to a vertex that has at most $2r - 1$ edges that have not already been counted.
  We can continue counting the new edges on vertices of each distance from the base point like this to get an upper bound.
  However, it is important to note that by \autoref{lem:vertex distance}, vertices of distance $k$ from the base point cannot have any edges that do not connect to a closer edge, so we get the following as our upper bound:
  \begin{equation*}
    2r\sum_{d = 0}^{k-1}(2r-1)^d = r\frac{(2r-1)^k - 1}{r-1}
  \end{equation*}
\end{proof}
\begin{corollary}\label{cor:even vertex count}
  If $S \subseteq B_{2k}$, then $\Core(\langle S \rangle)$ contains at most $\frac{r(2r-1)^k - 1}{r-1}$ vertices.
\end{corollary}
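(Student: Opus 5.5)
By \autoref{lem:vertex distance}, every vertex of $\Core(\langle S \rangle)$ lies within distance $k$ of the base point. The plan is to mirror the counting in \autoref{lem:odd vertex count}, using the degree bound $2r$ and a breadth-first layering from the base point. First I would fix a breadth-first spanning tree of $\Core(\langle S \rangle)$ rooted at the base point. Every vertex other than the base point is then reached along a tree edge from a vertex exactly one step closer.

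The count proceeds layer by layer. The base point has at most $2r$ neighbors, since the maximum degree of a Stallings core graph for a subgroup of $F_r$ is $2r$. Each vertex at distance $d \ge 1$ spends one of its edges on its parent in the tree, so it has at most $2r-1$ children at distance $d+1$. By induction, the number of vertices at distance exactly $d \ge 1$ is at most $2r(2r-1)^{d-1}$. Since no vertex lies beyond distance $k$, summing over $d = 0, \dots, k$ gives the total
\begin{equation*}
  1 + 2r\sum_{d=1}^{k}(2r-1)^{d-1} = 1 + 2r\cdot\frac{(2r-1)^k - 1}{2r-2} = \frac{r(2r-1)^k - 1}{r-1}.
\end{equation*}
This is the same closed form computed in \autoref{lem:odd vertex count}.

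There is an alternative route suggested by calling this a corollary of \autoref{lem:even edge bound}. Connectivity of the core graph gives $V \le E + 1$, so $V \le r\frac{(2r-1)^k - 1}{r-1} + 1 = \frac{r(2r-1)^k - 1}{r-1}$. I would present this one-line argument as the main proof and keep the layered count as a sanity check.

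I do not expect a real obstacle. The only point needing care is the algebraic identity $r\frac{(2r-1)^k-1}{r-1} + 1 = \frac{r(2r-1)^k - 1}{r-1}$. The case $r=1$ must also be handled separately, since the formula divides by $r-1$; there one reads the bound as the limit $2k+1$, which the layered count gives directly.
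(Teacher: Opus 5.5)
Your main argument is exactly the paper's: connectivity of the core graph gives $V \le E+1$, and combining this with \autoref{lem:even edge bound} yields $r\frac{(2r-1)^k-1}{r-1}+1=\frac{r(2r-1)^k-1}{r-1}$. Your layered count is also correct, and it is just the proof of \autoref{lem:odd vertex count} applied with $\lfloor 2k/2\rfloor = k$; it reaches the same bound without using the ``no edges between distance-$k$ vertices'' part of \autoref{lem:vertex distance}.
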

\begin{proof}
  As a Stallings core graph, $\Core(\langle S\rangle)$ must be connected, and so there can be at most one more vertex than there are edges.
\end{proof}
% We will combine \autoref{cor:odd edge bound} and \autoref{lem:even edge bound} into one statement for easy reference:
% \begin{proposition}\label{cor:vertex bound}
%     %If $S\subseteq B_{n}\subset F_r$, the the number of vertices in $\Core(\langle S\rangle)$ is bounded above by $\frac{\sqrt{2r-1}^{n+2}}{r-1}\in O\left(\sqrt{2r-1}^n\right)$.
%     If $S\subseteq B_{n}\subset F_r$, the the number of edges in $\Core(\langle S\rangle)$ is at most
%     \[
%     \begin{cases}r\frac{\sqrt{2r-1}^n-1}{r-1} & n \mathrm{\,\,is\,\, even}\\r\frac{r\sqrt{2r-1}^{n-1}-1}{r-1}& n \mathrm{\,\,is\,\, odd}\end{cases}\in O\left(\sqrt{2r-1}^n\right)
%     \]
% \end{proposition}
% 
\section{Maximum rank for bases}
With the results of the previous section, it is straightforward to prove an upper bound for \autoref{prob:Dotsenko}:
\begin{lemma}\label{lem:rank upper bound}
  The rank of a subgroup generated by elements of $B_n$ is at most
  \begin{equation*}
      \left\{
      \begin{array}{rl}
        \phantom{r}\sqrt{2r-1}^{n\phantom{-1}} & n \mathrm{\,\,is\,\,even} \\
        r\sqrt{2r-1}^{n-1} & n \mathrm{\,\,is\,\,odd}
      \end{array}
      \right.
  \end{equation*}
\end{lemma}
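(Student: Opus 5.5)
The plan is to combine \autoref{lem:stallings rank} with the edge and vertex counts of the previous section, splitting on the parity of $n$. Let $G=\langle S\rangle$ with $S\subseteq B_n$, and let $E$, $V$ be the numbers of edges and vertices of $\Core(G)$, so $\rank(G)=E-V+1$. Counting edges alone is not enough: for $n=2k$, \autoref{lem:even edge bound} gives $E\le r\frac{(2r-1)^k-1}{r-1}$, which exceeds $(2r-1)^k$. So I need a \emph{lower} bound on $V$ in terms of the edges used, and the natural one comes from a BFS spanning tree rooted at the base point.

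\textbf{Counting non-tree edges.} Take a breadth-first spanning tree $T$ of $\Core(G)$ rooted at $p$. Then $\rank(G)=E-(V-1)$ is exactly the number of edges not in $T$. I will count these edges by the depth at which they appear, repeating the branching argument of \autoref{lem:odd vertex count} and \autoref{lem:even edge bound}. The base point has at most $2r$ edge-ends. A vertex at depth $d\ge 1$ uses one edge-end on its tree edge to its parent and has at most $2r-1$ others. Each non-tree edge consumes two ``free'' edge-ends, unless it is a loop, which uses two ends at one vertex anyway.

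Write $N_d$ for the number of vertices at depth $d$ and $t_d$ for the number of free edge-ends at depth $d$ that do not become tree edges to depth $d+1$. Then $N_{d+1}+t_d\le (2r-1)N_d$ for $d\ge1$, and $N_1+t_0\le 2r$. The number of non-tree edges is at most $\frac12\sum_d t_d$. The total ``budget'' is therefore preserved: every edge-end either spawns a new vertex (which inherits $2r-1$ new ends) or is spent on a non-tree edge.

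\textbf{Case $n=2k+1$.} Every vertex has depth at most $k$ by \autoref{lem:vertex distance}. If we let the tree grow maximally and spend every end only at depth $k$, then $t_k\le 2r(2r-1)^{k-1}\cdot(2r-1)$, giving at most $r(2r-1)^k=r\sqrt{2r-1}^{\,n-1}$ non-tree edges. I will argue this is the optimum by a weighting argument. An end spent at depth $d<k$ is worth $\tfrac12$, whereas keeping it as a tree edge yields $2r-1\ge1$ ends at depth $d+1$. Formally, define the potential $\Phi_d=\tfrac12\sum_{j<d}t_j+\tfrac12(2r-1)^{k-d}\cdot(\text{free ends at depth }d)$ and show it is nonincreasing in $d$.

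\textbf{Case $n=2k$.} Here depth is at most $k$, and by \autoref{lem:vertex distance} there are no edges between vertices at depth $k$. Moreover a depth-$k$ vertex cannot carry a loop, since a loop would be an edge between two depth-$k$ vertices. So every non-tree edge has at least one end at depth at most $k-1$. I will charge each non-tree edge to an end at depth $\le k-1$, or, when it joins depth $k-1$ to depth $k$, to its end at depth $k-1$. Free ends at depth $\le k-1$ can be counted by the same potential argument truncated at depth $k-1$: there are at most $2r(2r-1)^{k-1}$ such ends at depth $k-1$ in the extremal configuration. I need to count more carefully, though. A depth-$k$ vertex uses one tree edge plus additional edges back to depth $k-1$, so each depth-$k$ vertex consumes at least one end from depth $k-1$ on a tree edge. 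If the $2r(2r-1)^{k-1}$ ends at depth $k-1$ are split among $m$ depth-$k$ vertices, then at most $2r(2r-1)^{k-1}-m$ non-tree edges go downward, and $m\ge$ (those ends)$/(2r)$. This gives at most $2r(2r-1)^{k-1}(1-\tfrac1{2r})=(2r-1)^k$ non-tree edges.

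The main obstacle is making the exchange (potential) argument rigorous when the budget is split across depths, especially in the even case, where spending ends at depth $k-1$ on edges among themselves must be compared against sending them to depth-$k$ vertices. I would handle it by the same monotone potential, assigning weight $\tfrac{2r-1}{2r}$ per end at depth $k-1$ and weight $\tfrac12$ to ends spent earlier.
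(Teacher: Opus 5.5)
Your argument is correct, but it takes a longer route than the paper. The paper's proof of this lemma does no layer-by-layer exchange. It uses only the handshake inequality $2E\le 2rV$, i.e.\ $V\ge E/r$, which is exactly the ``lower bound on $V$ in terms of the edges'' you say you need. Combined with \autoref{lem:stallings rank} this gives $\rank(G)\le\frac{r-1}{r}E+1$, and \autoref{lem:even edge bound} then yields $(2r-1)^k$ for $n=2k$. Written instead as $\rank(G)\le (r-1)V+1$, \autoref{lem:odd vertex count} yields $r(2r-1)^k$ for $n=2k+1$.

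The step you flag as the main obstacle does go through. In the odd case, $F_d=N_{d+1}+t_d$ and $F_{d+1}\le (2r-1)N_{d+1}$ give $\Phi_{d+1}\le\Phi_d$: the coefficient of $N_{d+1}$ stays at most $\frac12(2r-1)^{k-d}$, and that of $t_d$ drops from $\frac12(2r-1)^{k-d}$ to $\frac12$. In the even case, the weights $(2r-1)^{k-d}/(2r)$ on free ends at depth $d\le k-1$ are all at least $\frac12$ and work the same way. The final step uses $b\le 2rN_k$, where $b$ counts edge-ends at depth $k-1$ going down, together with $\frac12\le\frac{2r-1}{2r}$.

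Still, the potential is more than you need. Summing $N_{d+1}+t_d\le(2r-1)N_d$ over all $d$ gives $\frac12\sum_d t_d\le r+(r-1)(V-1)=(r-1)V+1$ directly, which is the paper's inequality. Your even-case refinement (each depth-$k$ vertex has at most $2r$ edge-ends, all going to depth $k-1$, exactly one on a tree edge) is the handshake bound applied to the last layer only. The paper applies it globally and lets \autoref{lem:even edge bound} carry the fact that depth-$k$ vertices have no edges among themselves.

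What your layered bookkeeping buys is an explicit description of the extremal core graphs: every layer above the bottom as full as possible, every spare edge-end spent at the bottom, and in the even case every depth-$k$ vertex of full degree $2r$. This matches $P_r(k)$ and $Q_r(k)$ and would help in characterizing equality. What the paper's route buys is a few-line proof that reuses the counting lemmas it already has.
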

\begin{proof}
    Consider $G\subseteq B_n\subset F_r$.
    Let us say $\Core(G)$ has $E$ edges and $V$ vertices.
    We can establish a lower bound on the number of vertices as follows:
    Each edge is incident on two vertices, and each vertex can be incident on at most $2r$ edges.
    Thus,
    \[
        V \geq \frac{2E}{2r} = \frac{E}{r}
    \]
    Applying \autoref{lem:stallings rank} we can get a bound on the rank of the subgroup in terms of $E$:
    \begin{align*}
        E+1-\rank(G) &\geq \frac Er \\
        \frac{(r-1)E}{r}+1 &\geq \rank(G) 
    \end{align*}
    Now, supposing $n$ is even we can apply \autoref{lem:even edge bound} to get a bound on $\rank(G)$ in terms of $n$ and $r$:
    \begin{align*}
        \rank(G)
        &\leq \frac{(r-1)E}{r}+1 \\
        &\leq \frac{(r-1)r\frac{\sqrt{2r-1}^n-1}{r-1}}{r}+1 \\
        &=\sqrt{2r-1}^n
    \end{align*}
    Now we will obtain a bound on the rank of $G$ in terms of the vertices.
    \begin{align*}
        V &\geq \frac{E}{r} \\
        V &\geq \frac{\rank(G)+V-1}{r} \\
        (r-1)V+1 &\geq \rank(G)
    \end{align*}
    Now we apply \autoref{lem:odd vertex count} to get a bound in terms of $n$ and $r$
    \begin{align*}
        \rank(G)
        &\leq  (r-1)V+1 \\
        &\leq  (r-1)\frac{r\sqrt{2r-1}^{n-1}-1}{r-1}+1 \\
        &=  r\sqrt{2r-1}^{n-1}
    \end{align*}
\end{proof}
However, it remains to be seen that this upper bound is in fact attainable.
Here we will provide two classes of bases in $S_n$ with rank equal to the upper bound given by \autoref{lem:rank upper bound}.
Let $P_r(k) \subset F_r$ be the set of all words of the form $wx_i\iota(w)^{-1}$ where $x_i$ is a generator of $F_r$, $w$ is a word of length $k$ that does not end in $x_i^{-1}$, and $\iota$ is the automorphism of $F_r$ that maps each of the generators of $F_r$ to its inverse.
Fix $x_1$ to be a generator of $F_r$ and $\varphi$ to be an automorphism of $F_r$ that cyclically permutes the set of generators and inverses of generators of $F_r$, and let $Q_r(k) \subset F_r$ be the set of all words of the form $x_1w\varphi^{(m)}(x_1w)^{-1}$, where $w$ is a word of length $k-1$ that does not begin with $x_1^{-1}$ and $1 \leq m < 2r$.
To demonstrate that these are in fact bases we will show that they are Nielsen reduced sets using the conditions outlined in the following lemma:

\begin{lemma}[\cite{mks}*{Lemma 3.1}]
  \label{lem:isolated}
  Let $R \subseteq F_r$.
  Then $R$ is Nielsen reduced if and only if the following conditions are satisfied.
  \begin{enumerate}
    \item\label{itm:major segments} For each $w \in R$, its prefix and suffix of length $\left\lceil \frac{|w|}{2} \right\rceil$, its major initial and major terminal segment, do not appear as the prefix or suffix, respectively, of any other word in $R^{\pm 1}$.
    \item\label{itm:half} For each $w \in R$ of even length, its prefix and suffix of length $\frac{w}{2}$, its left and right half, do not appear as the prefix or suffix, respectively, of any other word in $R^{\pm 1}$.
  \end{enumerate}
\end{lemma}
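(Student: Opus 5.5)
The plan is to work directly from the standard definition of a Nielsen reduced set, and to translate each of its defining inequalities into a statement about cancellation between pairs of words. Recall that $R$ is Nielsen reduced when:
\begin{enumerate}
  \item[(N0)] $1\notin R$;
  \item[(N1)] $|uv|\geq |u|,|v|$ for $u,v\in R^{\pm1}$ with $uv\neq 1$;
  \item[(N2)] $|uvw|>|u|-|v|+|w|$ for $u,v,w\in R^{\pm1}$ with $uv\neq 1\neq vw$.
\end{enumerate}
For a product $uv$ of reduced words, write $c(u,v)$ for the length of the cancelled piece. Thus the suffix of $u$ of length $c$ is the inverse of the prefix of $v$ of length $c$, and $|uv|=|u|+|v|-2c$.

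The first step is to reformulate (N1). We have $|uv|\ge |v|$ if and only if $c(u,v)\le |u|/2$, and $|uv|\ge|u|$ if and only if $c(u,v)\le |v|/2$. A violation therefore means that some $v\in R^{\pm1}$ has a prefix of length $\lceil |v|/2\rceil$ (or exactly $|v|/2$ plus one more letter) that coincides with the prefix of $u^{-1}\in R^{\pm1}$, where $u^{-1}\ne v$. Since the suffix of $w$ is the inverse of the prefix of $w^{-1}$, conditions on suffixes reduce to conditions on prefixes of $R^{\pm1}$. So (N1) is equivalent to: no major initial segment of a word in $R^{\pm1}$ is a proper-overlap prefix of a different word in $R^{\pm1}$. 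I will check the parity bookkeeping carefully: for odd $|v|$, $c>|v|/2$ is the same as $c\ge\lceil|v|/2\rceil$; for even $|v|$, $c=|v|/2$ is allowed by (N1).

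The second step handles (N2), and this is where condition~\ref{itm:half} enters. Given (N1), in $uvw$ at most half of $v$ cancels from each side, so $|uvw|\ge|u|-|v|+|w|$ always. Equality occurs exactly when $v$ has even length and both its left half and its right half cancel completely. This happens exactly when the left half of $v$ is the prefix of $u^{-1}\ne v$, or symmetrically for the right half with $w$. Conversely, if the left half of $v$ is also a prefix of some other $u'\in R^{\pm1}$, I take $u=u'^{-1}$ and $w=v^{-1}$-free choices (for instance $w=u'$ or another suitable word) to produce a violation of (N2). Then I verify that (N0) follows automatically, since the empty word would violate the segment conditions trivially, or is excluded by convention.

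The main obstacle is this converse direction for (N2), together with the degenerate cases. One must produce a genuine triple $u,v,w$ with $uv\ne1\ne vw$ that witnesses a failure of (N2) whenever a half is shared, in particular when the sharing word is $v^{-1}$ itself or when $|v|$ is small. If a clean witness fails in some case, I would fall back on showing that the shared half already violates (N1) through a length comparison. Throughout I take the phrase ``any other word in $R^{\pm1}$'' to exclude $w$ itself but not $w^{-1}$, and I would state this convention explicitly, because the equivalence depends on it.
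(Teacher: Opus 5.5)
The paper gives no proof of this lemma; it only cites Magnus--Karrass--Solitar. Your argument therefore has to stand on its own, and it breaks at the converse for (N2).

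Your analysis gets the key fact right. Given (N1), the strict inequality in (N2) can fail only when $|v|$ is even and \emph{both} halves of $v$ cancel: the left half against some $u\neq v^{-1}$ and the right half against some $w\neq v^{-1}$. In that case you get $|uvw|\le |u|-|v|+|w|$, not necessarily equality, because the leftover parts of $u$ and $w$ may cancel further; your ``$\ge$ always'' is wrong, but harmlessly so. The real problem is the very next sentence, where ``both'' becomes ``or''. Your planned converse (from one shared half, build a triple violating (N2), or else fall back on (N1)) cannot be carried out. The fallback fails in general because, as your own parity remark says, (N1) permits cancellation of exactly half. Worse, no witness of any kind need exist.

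Take $R=\{ab,ab^{-1}\}$ in $F_2=F(a,b)$. The only products $xy\neq 1$ of elements of $R^{\pm1}$ with any cancellation are $b^{-1}a^{-1}\cdot ab^{-1}=b^{-2}$ and $ba^{-1}\cdot ab=b^{2}$. From these, (N0)--(N2) are a short check, so $R$ is Nielsen reduced. Yet the left half $a$ of $ab$ is a prefix of $ab^{-1}$, which is neither $ab$ nor its inverse. So $R$ violates condition (2), and condition (1) as well, since $\lceil 2/2\rceil=1$. Your suggested witness $u=ba^{-1}$, $v=ab$, $w=ab^{-1}$ gives $|uvw|=|b^{2}ab^{-1}|=4>2$. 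Thus the ``only if'' direction of the statement, read literally, is false, and this step cannot be closed under any convention for ``other word''.

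What your steps do establish is the ``if'' direction, which is the only one the paper uses (for $P_r(k)$ and $Q_r(k)$).
\begin{itemize}
\item A violation of (N1) means some $v\in R^{\pm1}$ shares its prefix of length $\lfloor |v|/2\rfloor+1\ge\lceil |v|/2\rceil$ with another element of $R^{\pm1}$. This contradicts (1), applied to $v$ or, via suffixes, to $v^{-1}$.
\item Condition (2) forbids either half of an even-length $v$ from cancelling, which by your analysis gives (N2).
\item (N0) must be assumed separately, since $R=\{1\}$ satisfies (1) and (2) vacuously.
\end{itemize}
If you want a genuine equivalence, your computations give one once the statement is corrected:
\begin{itemize}
\item (N1) holds iff no $v\in R^{\pm1}$ has its prefix of length $\lfloor |v|/2\rfloor+1$ as a prefix of another element of $R^{\pm1}$.
\item Given (N1), (N2) holds iff no even-length $v\in R^{\pm1}$ has \emph{simultaneously} its left half a prefix of another element of $R^{\pm1}$ and its right half a suffix of another element of $R^{\pm1}$.
\end{itemize}
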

We can now show that the sets given by $P_r(k)$ and $Q_r(k)$ are indeed bases with rank equal to the upper bound given in \autoref{lem:rank upper bound}, thereby solving \autoref{prob:Dotsenko}.

\begin{proof}[Proof of \autoref{thm:max rank}]
  Since $P_r(k)$ contains only words of length $2k + 1$, we only need to satisfy \autoref{itm:major segments} from \autoref{lem:isolated} to show that it is Nielsen reduced.
  No word in $P_r(k)$ can share a major initial or major terminal segment with a word in $P_r(k)^{-1}$, since the middle letter of any element in $P_r(k)$ is always positive, where as in $P_r(k)^{-1}$ it would always be negative.
  Furthermore, by examining the first or last $k + 1$ letters we can fully identify the values of $w$ and $x_i$, thereby uniquely identifying the entire word.
  For $Q_r(k)$, since all the words it contains are of length $2k$, we must show that both that conditions of \autoref{lem:isolated} are satisfied for all words in $Q_r(k)$.
  Because the right half of the word is a suffix of the major terminal segment, it is sufficient to show that for any word $x_1w\varphi^{(m)}(x_1w)^{-1} \in Q_r(k)$, neither its major initial segment nor its right half are the prefix or suffix, respectively, of any other word in $Q_r(k)^{\pm 1}$.
  The last letter of any word in $Q_r(k)$ is of the form $\varphi^{(m)}(x_1^{-1})$, and so, is not $x^{-1}$, which means that no word in $Q_r(k)$ share a non-trivial prefix or suffix with a word in $Q_r(k)^{-1}$.
  Furthermore, by examining either the major initial segment or the right half of a word in $Q_r(k)$, we can find $w$ and $m$, thereby uniquely identifying the word.

  Finally, let us quickly confirm that for $n = 2k + 1$, $|P_r(k)| = r\sqrt{2r-1}^{n-1}$ and for $n = 2k$, $|Q_r(k)| = \sqrt{2r-1}^n$.
  For $P_r(k)$, all words are of the form $w x_i \iota(w)^{-1}$, and so we have $r$ choices for $x_i$, and then for each selection of $x_i$, $(2r-1)^k$ choices for $w$, giving a total size of $r(2r-1)^k = r\sqrt{2r-1}^{n-1}$ words where $n = 2k + 1$.
  For $Q_r(k)$, all words are of the form $x_1 w \varphi^{(m)}(x_1 w)^{-1}$, which means we have $(2r-1)^{k-1}$ choices for $w$ and $2r-1$ choices for $m$, for a total of $(2r-1)^k = \sqrt{2r-1}^n$ words where $n = 2k$.

  Thus, the maximum possible rank of a subgroup generated by elements of $S_n$ is at least
  \begin{equation*}
    \left\{
    \begin{array}{rl}
      \phantom{r}\sqrt{2r-1}^{n\phantom{-1}} & n \mathrm{\,\,is\,\,even} \\
      r\sqrt{2r-1}^{n-1} & n \mathrm{\,\,is\,\,odd}
    \end{array}
    \right.
  \end{equation*}
  Because $S_n \subseteq B_n$, \autoref{lem:rank upper bound} shows that this is precisely the maximum possible rank in the $S_n$ case.
  This result applies for the $B_n$ case as well due to \autoref{lem:rank upper bound} and the fact that $S_n \subseteq B_n$.
\end{proof}

\begin{figure}[p]
\includesvg[width=.8\textwidth]{2-5.svg}
\caption{The Stallings core graph for one of the greatest rank subgroups of $F_2$ generated by words of length 5.}
\end{figure}
\begin{figure}[p]
\includesvg[width=.9\textwidth]{2-6.svg}
\caption{The Stallings core graph for one of the greatest rank subgroups of $F_2$ generated by words of length 6.}
\end{figure}

\section{Problem 10.13}

The results above can be applied to show \autoref{thm:independent set limit} and thus solve \autoref{prob:Kourovka}.

% This is more general in a few ways:
% \begin{itemize}
%     \item It is not restricted to free groups only of rank 2.
%     \item Each element of $S$ is only required to be outside of the group generated by its complement rather than its normal closure.
%     Since $\left\langle S-s\right\rangle\subseteq \left\llangle S-s\right\rrangle$, this is a weaker condition on $S$.
%     \item We provide a explicit upper bound on $g_S$.
% \end{itemize}

We start with a small technical lemma.
The following has a somewhat complex precise statement, but the idea is simple.

\begin{lemma}\label{lem:monotone}
    Let $S\subset F_r$ be a finite independent set.
    Let $K\subseteq S$ be a maximal basis.
    That is, $K$ is a basis of $\left\langle K\right\rangle$ and $K$ is not contained within a larger subset of $S$ forming a basis.
    Now let $w$ be some word independent of $S$.
    If the number of vertices in $\Core(S\cup w)$ is greater than or equal to the number of vertices in $\Core(S)$, then $K\cup w$ is a basis.
\end{lemma}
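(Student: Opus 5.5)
The plan is to understand $\Core(S\cup w)$ as the result of attaching a single loop labelled $w$ to $\Core(S)$ at the base point and folding. Since $\Core(S)$ is already folded, every fold must involve an edge of the new loop. I will show that the vertex hypothesis forces this folding to be very tame: $\Core(S)$ embeds in $\Core(S\cup w)$, and the loop contributes a single new arc. From this, free independence of $K\cup w$ will follow by a normal form argument.

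\textbf{Step 1 (greedy reading).} Write $w=y_1\cdots y_\ell$ as a reduced word. Read $w$ from the base point inside $\Core(S)$ for as long as the required edges exist, stopping after $i$ letters at a vertex $p$. Symmetrically, read $w^{-1}$ from the base point, stopping after $j$ letters at a vertex $q$. This gives $w=s_1us_2$, where $s_1=y_1\cdots y_i$ labels a path from the base point to $p$ and $s_2$ labels a path from $q$ to the base point, so $s_1,s_2$ are path labels in $\Core(S)$. Folding the loop along these two readings is exactly the first batch of folds.

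\textbf{Step 2 (case analysis).} Suppose $i+j\ge \ell$, so the two readings meet or overlap. Then the remaining folds identify $p$ with the vertex reached by reading $y_{1}\cdots y_{\ell-j}$, which lies in $\Core(S)$. Every vertex of the loop is then already mapped into $\Core(S)$.
\begin{itemize}
\item If the identified vertices coincide, $w$ labels a closed path in $\Core(S)$, so $w\in\langle S\rangle$. This contradicts the assumption that $w$ is independent of $S$.
\item Otherwise two distinct vertices of $\Core(S)$ are glued. Because no new vertices survive from the loop, the resulting graph, and every further quotient of it, has at most $V(\Core(S))-1$ vertices. This contradicts the hypothesis.
\end{itemize}
Hence $i+j<\ell$, and $u$ is a nonempty reduced word. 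By maximality of the readings, $\Core(S)$ has no edge labelled by the first letter of $u$ leaving $p$, and no edge labelled by the last letter of $u$ entering $q$. So $\Core(S)$ together with an arc from $p$ to $q$ labelled $u$ is already folded, and it is $\Core(S\cup w)$. In particular $\langle S\cup w\rangle=\langle S\rangle * \langle s_1us_2\rangle$, with $\Core(S)$ embedded.

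\textbf{Step 3 (basis).} Take a nontrivial reduced word in $K\cup\{w\}$ and write it as $k_0w^{\epsilon_1}k_1\cdots w^{\epsilon_m}k_m$ with $k_t\in\langle K\rangle$. Rewrite it in the free product $\langle S\rangle*\langle s_1us_2\rangle$ using $w=s_1us_2$, or equivalently trace it as a path in the graph from Step 2. The only possible cancellation is in a subword $w^{\epsilon}k w^{-\epsilon}$ with $k=1$, which cannot occur in a reduced word. In the case $w k w$ the letters $u$ remain separated by the element $s_2ks_1$ of $\langle S\rangle$, which is harmless. So the word is nontrivial whenever some $\epsilon_t\neq 0$. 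When no $w$ appears, the word lies in $\langle K\rangle$ and is nontrivial because $K$ is a basis. Therefore $K\cup w$ is a basis.

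I expect Step 2 to be the main obstacle, in particular making rigorous that once the two greedy readings overlap, every loop vertex lands in the image of $\Core(S)$, and that gluing two distinct vertices of $\Core(S)$ can only lower the vertex count. I would handle this by noting that each fold is a graph quotient, so the vertex count is non-increasing, and that the image of $\Core(S)$ already covers every vertex. Independence of $S$ itself is used only through the condition $w\notin\langle S\rangle$.
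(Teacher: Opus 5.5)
Your proof is correct in substance. Its first half matches the paper's argument, and its second half takes a different route.

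\textbf{First half.} The paper splits on whether some vertex of the $w$-loop survives as a new vertex, and simply asserts that $w$ then cannot identify vertices of $\Core(S)$. Your forward/backward greedy reading is what actually justifies this. Once the two readings overlap, every loop vertex is absorbed into $\Core(S)$, so the vertex count can stay put only if $w\in\langle S\rangle$.

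\textbf{Second half.} To pass from $S$ to $K$, the paper stays with graphs. It takes the immersion $\alpha\colon\Core(K\cup w)\to\Core(S\cup w)$ and identifies $\alpha^{-1}(\Core(S))$ with $\Core(K)$. It then concludes that $\Core(K)$ is a proper subgraph of $\Core(K\cup w)$, so the rank rises to $|K|+1$ and the $|K|+1$ generators form a basis. You instead extract the splitting $\langle S\cup w\rangle=\langle S\rangle*\langle w\rangle$ and finish with the normal form theorem for free products, never looking at $\Core(K\cup w)$.

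Your route is more robust. The paper's identification of the preimage is not literally right: the $w$-arc in $\Core(K\cup w)$ can start and end with segments that are readable in $\Core(S)$ but not in $\Core(K)$, and these also lie in the preimage. The paper's conclusion that $\Core(K)$ is a proper subgraph does survive. Your version also makes plain that neither the independence of $S$ nor the maximality of $K$ is used: any free basis of any subgroup of $\langle S\rangle$ extends by $w$.

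\textbf{Two repairs.} First, in the overlapping case, the vertex glued to the one reached by reading $y_1\cdots y_{\ell-j}$ is $q$, not $p$. Equivalently, $p$ is glued to the vertex reached by reading $y_\ell^{-1}\cdots y_{i+1}^{-1}$. With that correction your dichotomy is right.

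Second, and more substantively, the claim that $\Core(S)$ plus the $u$-arc is already folded fails when $p=q$ and $u$ begins with some letter $x$ and ends with $x^{-1}$. For instance, $S=\{b\}$ and $w=aba^{-1}$ give $i=j=0$ and $u=w$, and the two end edges of the arc fold together. The fix is short:
\begin{enumerate}
\item The remaining folds zip the arc from both ends into a stem at $p$, followed by a cycle labelled by the cyclic reduction of $u$.
\item Each such fold identifies two edges with distinct far endpoints because $u$ is reduced, so it is a homotopy equivalence.
\item None of these folds involves an edge of $\Core(S)$, because $\Core(S)$ has no edge labelled $x$ leaving $p$, by maximality of the forward reading.
\end{enumerate}
Thus $\Core(S)$ still embeds and the splitting $\langle S\cup w\rangle=\langle S\rangle*\langle w\rangle$ still holds, so your Step 3 goes through unchanged.
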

\begin{proof}
    Suppose first that there is some vertex on $w$ which is not identified with any vertex in $\Core(S)$.
    Notice then that $w$ cannot identify existing vertices of $\Core(S)$ either;
    adding $w$ to $\Core(S)$ simply adds a ``bridge'' between existing vertices.
    Now suppose every vertex on $w$ is identified with a vertex in $\Core(S)$.
    Adding $w$ cannot increase the number of vertices, so it must leave the number of vertices unchanged.
    Thus, again, $w$ cannot identify existing vertices of $\Core(S)$, and again adding $w$ adds a bridge between existing vertices (this time consisting of a single edge).
    Thus, $\Core(S)$ is a strict subgraph of $\Core(X\cup w)$.

    Let $\alpha$ be the natural covering map $\Core(K\cup w)\to \Core(S\cup w)$.
    Now because $\Core(S)$ is a subgraph of $\Core(S\cup w)$, we can consider the preimage $\alpha^{-1}\left(\Core(S)\right)$.
    Naturally, this is exactly $\Core(K)$, and thus $\Core(K)$ is a strict subgraph of $\Core(K\cup w)$.
    Thus, $\rank(\left\langle K\right\rangle) < \rank (\left\langle K\cup w\right\rangle)$, and so $K\cup w$ is a basis.
\end{proof}

We will now give a version of \autoref{thm:independent set limit} with more explicit bounds.
This is intended for ease of reference, but we think that this bound is actually quite weak.
\begin{theorem}
    Let $S\subseteq B_n\subseteq F_r$ be an independent set.
    \[
    |S|\in O\left(\sqrt{2r-1}^n\right)
    \]
    Specifically, $|S|$ can never be greater than the number of edges in the core graph of a subgroup generated by elements of $B_n$:
    \[
    |S|\leq \begin{cases}r\frac{\sqrt{2r-1}^n-1}{r-1} & n \mathrm{\,\,is\,\, even}\\r\frac{r\sqrt{2r-1}^{n-1}-1}{r-1}& n \mathrm{\,\,is\,\, odd}\end{cases}
    \]
\end{theorem}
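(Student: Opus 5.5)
We prove the bound on $|S|$ by building $S$ up one element at a time, showing that each new element adds at least one edge to the Stallings core graph. The counting then follows from \autoref{lem:even edge bound} and \autoref{cor:odd edge bound}. Since the stated bound is finite, we may first reduce to finite subsets of $S$: every subset of an independent set is independent, so it suffices to prove the bound for finite $S$.

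Enumerate $S=\{s_1,\dots,s_m\}$ and put $S_j=\{s_1,\dots,s_j\}$. Since each $S_j\subseteq B_n$, \autoref{lem:even edge bound} and \autoref{cor:odd edge bound} bound the number of edges $E_j$ of $\Core(\langle S_j\rangle)$ by the displayed quantity. The whole proof therefore reduces to a single claim: $E_{j+1}>E_j$ for every $j$. Granting this, $E_m\ge m-1+E_1\ge m$ (since $s_1\neq 1$ gives $E_1\ge1$), and so $|S|\le E_m$, which is the statement.

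To prove the claim, fix $j$ and write $w=s_{j+1}$. Independence of $S$ gives $w\notin\langle S_j\rangle$. We compare the two core graphs through the folding process: $\Core(\langle S_j\cup w\rangle)$ is obtained from $\Core(\langle S_j\rangle)$ by wedging on a loop labelled $w$ at the base point and then folding. We split into two cases according to whether the vertex count drops.

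\textbf{Vertex count does not drop.} This is the hypothesis of \autoref{lem:monotone}, and the argument in its proof shows that $\Core(\langle S_j\rangle)$ is a strict subgraph of $\Core(\langle S_j\cup w\rangle)$. The new graph has at least one extra edge, so $E_{j+1}>E_j$.

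\textbf{Vertex count drops.} This is the main obstacle: folding may identify vertices of the old core graph, and then the edge count could in principle fall. What I would try first is a counting argument via \autoref{lem:stallings rank}. Because $w\notin\langle S_j\rangle$, we have $\langle S_j\rangle\subsetneq\langle S_j\cup w\rangle$, but this does not force the rank to go up. If the rank does not decrease, then $E-V$ is nondecreasing, and a drop in $V$ forces $E_{j+1}>E_j$. The hard part is therefore the case where the rank decreases. There I would try to use \autoref{lem:monotone} together with the maximal basis $K\subseteq S_j$ to pass to a different ordering of $S$ in which no such collapse occurs. For instance, I would order $S$ so that the added elements first form a maximal basis, and then argue that each later element still leaves the image of the old core graph intact via the covering map $\Core(\langle K\cup w\rangle)\to\Core(\langle S_j\cup w\rangle)$ used in \autoref{lem:monotone}. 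Once the claim is established in a suitable ordering, the inequality follows immediately, and the asymptotic statement $|S|\in O(\sqrt{2r-1}^n)$ is read off from the explicit bound.
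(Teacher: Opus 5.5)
\textbf{There is a genuine gap, and it breaks the whole strategy.} The claim $E_{j+1}>E_j$ is false. So is the inequality $|S|\le E_m$, i.e.\ $|S|\le$ the number of edges of $\Core(\langle S\rangle)$, which the claim was meant to deliver.

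Take $S=\{a^2,a^3\}\subseteq B_3\subset F_r$. It is independent, since $a^2\notin\langle a^3\rangle$ and $a^3\notin\langle a^2\rangle$. But $\langle S\rangle=\langle a\rangle$, whose core graph is a single loop, so $E_m=1<2=|S|$. Similarly $\{a^2,a^3,b\}$ is independent and generates $F_2$, whose core has only $2$ edges.

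Because $E_m$ depends only on $\langle S\rangle$, no reordering can rescue this. The step that fails is in your second case. If the rank does not decrease, you get $E_{j+1}-V_{j+1}\ge E_j-V_j$, i.e.\ $E_{j+1}\ge E_j-(V_j-V_{j+1})$. A drop in $V$ therefore weakens the lower bound on $E_{j+1}$; it does not force $E$ up. In the example (ordering $a^2,a^3$) the rank stays $1$ while $(V,E)$ goes from $(2,2)$ to $(1,1)$. Your first case, with no vertex drop, is fine but does not help.

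\textbf{The missing idea.} Use the vertex count as the monotone quantity, and bound $|S|$ by the edges of $\Core(\langle K\rangle)$ for a maximal basis $K\subseteq S$, not by the edges of $\Core(\langle S\rangle)$.

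Order $S-K$ as $s_1,s_2,\dots$ and put $S_i=K\cup\{s_1,\dots,s_i\}$. By independence, $s_{i+1}\notin\langle S_i\rangle$. By maximality, $K\cup s_{i+1}$ is not a basis. The contrapositive of \autoref{lem:monotone} then says that $\Core(\langle S_{i+1}\rangle)$ has strictly fewer vertices than $\Core(\langle S_i\rangle)$.

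So the collapses your fallback hopes to avoid are in fact forced, and they are exactly what drives the count. With $V$ and $E$ the vertex and edge counts of $\Core(\langle K\rangle)$, you get $|S-K|\le V-1$. Hence $|S|\le\rank(\langle K\rangle)+V-1=E$ by \autoref{lem:stallings rank}. Since $K\subseteq B_n$, this $E$ is bounded by \autoref{lem:even edge bound} and \autoref{cor:odd edge bound}.

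Your reduction to finite $S$ is harmless but unnecessary, since $B_n$ is already finite.
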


\begin{proof}[Proof of \autoref{thm:independent set limit}]
    Let $K\subseteq S$ be a maximal basis.
    Now consider an ordering $s_1,s_2,\dots, s_n$ on $S-K$.
    Let $S_i = K\cup \{s_1,\dots,s_i\}$.
    By \autoref{lem:monotone} $\Core(\langle S_{i+1}\rangle)$ must have fewer vertices than $\Core(\langle S_i\rangle)$.
    Thus, $S-K$ must have less elements than $\Core(\langle K \rangle)$ has vertices.
    Thus, $|S|\leq \rank(\langle K\rangle) + V - 1$ where $V$ is the number of vertices in $\Core(\langle K\rangle)$.
    By \autoref{lem:stallings rank}, $\rank(\langle K\rangle) + V - 1$ is the number of edges in $\Core(\langle K\rangle)$, which we shall call $E$.
    Now we have that $|S|\leq E$, and by \autoref{lem:even edge bound} and \autoref{cor:odd edge bound}, we have an explicit bound on the number of edges.
\end{proof}

Now we will make explicit the application of this theorem to \autoref{prob:Kourovka}.

\begin{proof}[Solution to \autoref{prob:Kourovka}]
Let $S\subset F_r$ be an independent set.
$S\cap B_n$ is also an independent set for all $n$.
By definition, $g_S(n)=|S\cap B_n|$.
Thus, by \autoref{thm:independent set limit}, $g_S(n)\in O\left(\sqrt{2r-1}^n\right)$.
Therefore, the answer to \autoref{prob:Kourovka} is ``No''.
\end{proof}

An immediate question is whether the bound given in \autoref{thm:independent set limit} can be brought down.
While we maintain that we believe the fine bound can be brought down, we will show that one cannot bring the bound on the growth of independent sets much further down.

\begin{proposition}\label{prop:indpendent set lower bound}
    Let $F_r$ be a finitely generated non-Abelian free group.
    For every $b < \sqrt{2r-1}$, there exists an independent set $S\subset F_r$ such that
    \[
       g_S(n)\in \Omega\left(b^n\right)
    \]
\end{proposition}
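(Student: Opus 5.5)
The plan is to build $S$ as an infinite union of ``layers'', each a copy of the sets $P_r(k)$ from the proof of \autoref{thm:max rank}, and to show the whole union is Nielsen reduced using \autoref{lem:isolated}. Every finite subset of a Nielsen reduced set is again Nielsen reduced, hence a free basis of the subgroup it generates. So for any $x\in S$, take a finite $T\subseteq S-\{x\}$; then $x\notin\langle T\rangle$. Since every element of $\langle S-\{x\}\rangle$ lies in $\langle T\rangle$ for some finite $T$, this gives independence. The growth estimate will come from making the layers dense enough in length.

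\textbf{Construction.} Fix the marker words $u_j=x_1^{j}x_2$ for $j\geq 1$. These are pairwise prefix-incomparable (none is a prefix of another), and each begins with the positive letter $x_1$. Put $k_j=j^2$, which is larger than $|u_j|=j+1$ for $j\geq 2$. Layer $j$ is
\[
L_j=\left\{w x_i\iota(w)^{-1} : |w|=k_j,\ w \text{ begins with } u_j,\ w \text{ does not end in } x_i^{-1}\right\}.
\]
It is a subset of $P_r(k_j)$ and consists of reduced words of length $2k_j+1$. Set $S=\bigcup_{j\ge2}L_j$. Within a layer $j$, the count from the proof of \autoref{thm:max rank} gives $|L_j|\geq r(2r-1)^{k_j-j-1}$, up to a harmless constant depending only on $r$.

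\textbf{Nielsen reducedness.} Every word in $S$ has odd length, so only condition \autoref{itm:major segments} of \autoref{lem:isolated} needs checking. The major initial segment of $wx_i\iota(w)^{-1}$ is $wx_i$.
\begin{enumerate}
\item Against another word of the same layer, the argument from the proof of \autoref{thm:max rank} applies verbatim.
\item Against a word of a different layer $j'$, the segment $wx_i$ begins with $u_j$, while that word begins with $u_{j'}$. Because $u_j$ and $u_{j'}$ are prefix-incomparable, neither can be a prefix of the other.
\item Against inverses, every word of $S^{-1}$ begins with the first letter of $\iota(w)$, which is $x_1^{-1}$. Every segment $wx_i$ begins with $x_1$, so these never agree.
\end{enumerate}
Terminal segments are handled symmetrically. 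The major terminal segment is $x_i\iota(w)^{-1}$, whose tail is $\iota(u_j)^{-1}$, namely $\ldots x_2^{-1}\ldots$ reversed appropriately. The same incomparability argument, together with the sign of the last letter, separates different layers and inverses.

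\textbf{Growth.} Given $n$, choose $j$ maximal with $2k_j+1\leq n$, so that $n<2(j+1)^2+1$. Then
\[
g_S(n)\ \ge\ |L_j|\ \ge\ (2r-1)^{j^2-j-1}.
\]
Since $j^2-j-1=\tfrac n2(1-o(1))$, this is $\sqrt{2r-1}^{\,n(1-o(1))}$, which exceeds $b^n$ for all large $n$ whenever $b<\sqrt{2r-1}$.

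I expect the main obstacle to be the cross-layer bookkeeping in \autoref{lem:isolated}. The lemma compares prefixes of words of different lengths, and I must make sure the prefix-free marker really blocks every coincidence, including those between prefixes of $S$ and suffixes of $S^{-1}$. If the single marker $u_j$ proves insufficient there, the fallback is to also append a mirrored marker at the middle.
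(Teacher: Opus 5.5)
Your argument is correct, and it takes a genuinely different route from the paper's.

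\textbf{The paper's route.} The paper fixes a parameter $n$ depending on $b$. It lets $Y_n$ be the reduced words containing no $x_1^{\pm n}$ and not ending in $x_1^{\pm 1}$, with $n$ large enough that $g_{Y_n}(m)\in\Omega(b^{2m})$. It then takes $S$ to be the conjugates $yx_1^{2n}y^{-1}$ with $y\in Y_n$. Independence is asserted as clear. Since each conjugate has length about $2|y|$, growth $b^2$ in $Y_n$ becomes growth $b$ in $S$.

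\textbf{Your route.} You stack subsets of the extremal palindromic bases $P_r(j^2)$ at the sparse lengths $2j^2+1$, separated by the prefix-free markers $x_1^jx_2$. You then verify Nielsen reducedness of the whole union from \autoref{lem:isolated}, so independence is actually proved rather than asserted. The checks go through: the distinguishing letter sits at position $\min(j,j')+1\le j^2+1$, inside the major initial segment, and inverses are separated by the sign of the first letter.

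Your description of the terminal side is garbled, but it is easily repaired. Since $\iota(w)^{-1}$ is just the reversal of $w$, every element of $S^{\pm1}$ is a palindrome. The tail of a layer-$j$ word is therefore $x_2x_1^{j}$, and the suffix conditions are literally the reversals of the prefix conditions.

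\textbf{What each route buys.} Yours produces a single set $S$, independent of $b$, with $g_S(m)\ge(2r-1)^{m/2-O(\sqrt m)}$. That is the stronger statement with the quantifiers swapped. Moreover, each $S\cap B_m$ is a free basis, so \autoref{lem:rank upper bound} supplies the matching upper bound. Hence your $S$ has exponential growth rate exactly $\sqrt{2r-1}$ in the $\limsup$ sense of the paper's definition. This answers the first question of the paper's final section as literally posed. It does not give $g_S\in\Theta(\sqrt{2r-1}^{\,m})$, because of the $(2r-1)^{O(\sqrt m)}$ loss to the markers.

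The paper's construction is conceptually simpler and fits the relator/normal-closure picture. However, the relator $x_1^{2n}$ must change with $b$, so it cannot yield one set that works for every $b<\sqrt{2r-1}$.
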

\begin{proof}
    Let $\{x_1,x_2,\dots,x_r\}$ be a basis of $F_r$.
    Let $Y_n$ be all words in $F_r$ not containing $x_1^{\pm n}$ and not ending with $x_1^{\pm 1}$.
    It is clear that for every $c < 2r-1$ there is some $n$ such that
    \[
       g_{Y_n}(n)\in \Omega(c^n)
    \]
    That is as $n$ grows, the growth rates of $Y_n$ get closer and closer to that of $F_r$.

    Now let $n$ be the value given above when $c=b^2$.
    We will construct our independent set $S$ as all conjugates of $x_1^{2n}$ by elements of $Y_n$.
    It is clear that $S$ is an independent set.
    Furthermore, two elements $y_0,y_1\in Y_n$ give unique conjugates of $x_1^n$ of lengths $2|y_0|+n$ and $2|y_1|+n$ respectively.
    Thus, $g_S(n)\in \Omega(\sqrt{c}^n)=\Omega(b^n)$.
\end{proof}

\section{Questions}

Since we have solved two problems, we would like to pose three.

We've shown a reasonably tight bound on the worst-case growth rate of an independent set (\autoref{thm:independent set limit} and \autoref{prop:indpendent set lower bound}), but it leaves a loose end:
% \begin{definition}
%     For a set $S\subseteq F_r$, define its {\it exponential growth rate} as
%     \[
%         \inf \{b\mid g_S(n)\in O(b^n)\}
%     \]
% \end{definition}
\begin{problem}
    In a finitely generated free group $F_r$, does there exist some independent subset with exponential growth rate exactly $\sqrt{2r-1}$?
\end{problem}

The solution to \hyperref[prob:Kourovka]{Grigorchuk's problem} is only the beginning of a broader problem.
Once we have shown no such set is massive, it is natural to ask what further can be shown about the growth of such a set.
Thus, we pose the following question as a next step:

\begin{problem}\label{prob:subexponential growth of normally independent sets}
    Does there exist a set $S\subset F_r$ such that:
    \begin{itemize}
    \item for all $s \in S$, $s\notin \left\llangle S - \{s\}\right\rrangle$.
    \item $S$ grows exponentially.
    That is, $b^n \in O(g_S(n))$ for some $b > 1$.
    \end{itemize}
\end{problem}

We conjecture the answer to \autoref{prob:subexponential growth of normally independent sets} is ``No''.

Lastly, we will pose a related question to \autoref{prob:Dotsenko}:
\begin{problem}
   What is the largest independent subset of:
   \begin{enumerate}[label=(\alph*)]
       \item $S_n\subset F_r$?
       \item $B_n\subset F_r$?
   \end{enumerate}
\end{problem}
Here we know that the answer lies in $\Theta\left(\sqrt{2r-1}^n\right)$ but a finer characterization would be interesting.
Although we will not prove it, it is not hard to see using the results provided herein that the following is an upper bound:
\[
    \rho(r,n) + \left\lfloor \log_2(v(r,n))\right\rfloor
\]
where $\rho$ is the maximum rank of a basis given in \autoref{thm:max rank}, and $v$ is the upper bound on the number of vertices given in \autoref{lem:odd vertex count} and \autoref{cor:even vertex count}.

\bibliography{refs}

\end{document}